\newtheorem{theo}{Theorem}[section]
\newtheorem*{theo*}{Theorem}
\newtheorem{lemma}[theo]{Lemma}
\DeclareMathOperator{\vol}{vol}
\newcommand{\ip}[2]{\left\langle #1,#2\right\rangle}
\newcommand{\bnorm}[1]{\left\vert#1\right\vert}
\newcommand{\Kn}{{\mathcal K}^n} %convex bodies
\newcommand{\Ksn}{{\mathcal K}_{e}^n} %o-symmetric convex bodies
\newcommand{\Knull}{{\mathcal K}_{(o)}^n} %o in the interior 
\newcommand{\Kcn}{{\mathcal K}_c^n} %centered convex bodies, centroid
\newcommand{\R}{\mathbb{R}} %real numbers
\newcommand{\Z}{\mathbb{Z}} %integral numbers
\newcommand{\conevol}[1]{\operatorname{V}_{#1}} % cone volume
\newcommand{\regbd}{\operatorname{bd}^{'}} % regular boundary
\newcommand{\pyr}[2]{[#1,#2]} 
\newcommand{\C}[3]{C_{#1,#2}^{(#3)}}
\newcommand{\F}[3]{F_{#1,#2}^{(#3)}}
\newcommand{\conv}{\mathrm{conv}\,} %convex hull
\newcommand{\suk}{\mathrm{h}}%support fnction
\newcommand{\lin}{\mathrm{lin}\,}
\newcommand{\bd}{\mathrm{bd}\,}
\DeclareMathOperator{\dint}{d}
\newcommand{\sph}{\mathbb{S}}% sphere
\newcommand{\sura}{\mathrm{S}}
\newcommand{\haus}{\hausdorff}
\newcommand{\V}{\mathrm{V}}
\DeclareMathOperator{\cen}{c}
\DeclareMathOperator{\EE}{\mathbb{E}}
\DeclareMathOperator{\PP}{\mathbb{P}}
\newcommand{\hausdorff}{\mathcal{H}}
\DeclareMathOperator{\aff}{aff}
\numberwithin{equation}{section}
\begin{document}
	
\title[Affine subspace inequality]{The affine subspace concentration inequality for centered convex bodies}
\author{Katharina Eller}
\address{Institut f\"ur Mathematik,
Technische Universit\"at Berlin,
Stra\ss e des 17.\ Juni 136,
10623 Berlin, Deutschland}
\email{eller@math.tu-berlin.de}
\author{Ansgar Freyer}
\address{Institut f\"ur Diskrete Mathematik und Geometrie,
Technische Universit\"at Wien,
Wiedner Hauptstra\ss e 8-10/1046,
1040 Wien, Austria}
\email{ansgar.freyer@tuwien.ac.at}

%	Sekr. Ma 4-1, Strasse des 17 Juni 136, D-10623 Berlin, Germany}\email{\{eller, henk\}@math.tu-berlin.de}
%\thanks{The research of }
	
\begin{abstract}
An affine version of the linear subspace concentration inequality as proposed by Wu in \cite{wu2023affine} is established for centered convex bodies. This generalizes results from \cite{wu2023affine} and  \cite{FHK} on polytopes to convex bodies.
\end{abstract}
\maketitle

%\nocite{*}
\section{Introduction}
Let $\Kn$ denote the set of convex bodies in $\R^n$,
i.e., the family of all convex and compact subsets
$K\subset\R^n$ with non-empty interior. The subfamily of convex bodies 
containing the origin in their interior is denoted by $\Knull$ and the subset of origin-symmetric convex bodies, i.e., the sets $K \in\Kn$ satisfying $K=-K$, is denoted by $\Ksn$. 
A convex body $K\in\Kn$ is called centered if its
centroid $\cen(K)$ is located at the origin, i.e.,
\begin{equation*}
  \cen(K)= \frac{1}{\vol_n(K)}\int_K x \, \dint \hausdorff^{n}(x)=0,
\end{equation*}
where $\haus^n$ denotes 
 the  $n$-dimensional Hausdorff measure. We denote by $\vol_n$ the $n$-dimensional volume functional, i.e., the restriction of $\haus^n$ to the set of $n$-dimensional convex bodies. The set of all  centered convex bodies in $\R^n$ is denoted by
$\Kcn$.
%and, in particular, we have $ \Kcn\subset\Knull$.
For $x,y \in \R^n$ let $\ip{x}{y}$ denote the
standard inner product on $\R^n$, and $\bnorm x=\sqrt{\ip{x}{x}}$ the Euclidean norm of $x$. 

The study of geometric measures related to convex bodies is a corner stone of convex geometry. One of the central problems in the classical Brunn-Minkowski theory is the
{Minkowski-Christoffel problem} asking for necessary and sufficient conditions
characterizing the surface area measures of a convex body among the finite
Borel measures on the sphere. For a definition of the surface area measures
and an overview of the Minkowski-Christoffel problem we refer to
\cite[Chapter 8]{schneider_2013}. In addition to the surface area measures, another important geometric measure is the {cone volume measure} $\conevol{K}$ of a convex body $K\in\Knull$.
%appearing, e.g., in \cite{Boroczky2022, boroczky2014cone,boroczky2016discrete,boroczky2014conevolume,boroczky2012log, chen2019logarithmic,stancu2016logarithmic,wu2023affine, zhu2014logarithmic}. 
It is defined for a Borel set $\omega \subseteq \sph^{n-1}$ of the sphere by 
\begin{align*}
   \conevol{K}(\omega)= \frac{1}{n} \int_{\nu_K^{-1}(\omega)} \langle x , \nu_K(x)\rangle \dint\hausdorff^{n-1}(x),
\end{align*}
where $\nu_K$ is the Gauss map of $K$, i.e., $\nu_K$ is defined on the regular boundary points $\regbd(K)$ of $K$ and assigns each such boundary point its unique outer unit normal vector.
%An important aspect to study is the corresponding Minkowski problem, i.e. the logarithmic Minkowski problem, asking for necessary and sufficient conditions for the existence of the cone volume measure.
The interest in the cone volume measure also stems from the fact that it is a particular instance of an $L_p$ surface area measure (see \cite[Section 9.2]{schneider_2013}) that allows for a geometric interpretation. The geometry behind the cone volume measure is best explained for polytopes: Let $P = \{x\in\R^n \colon \langle a_i,x\rangle \leq 1,~1\leq i\leq m\}$ be a polytope in $\Knull$. We assume that the inequalities $\langle a_i,x\rangle\leq 1$ are {irredundant}, i.e., each of them defines a proper facet $F_i$ of $P$. If we denote by $C_i\subset P$ the convex hull of the facet $F_i$ with the origin (a ``cone'' in $P$), one can verify that
\begin{equation}
    \label{eq:cone_vol_polytope}
    \V_P(\omega) =  \sum_{i:~\tfrac{a_i}{|a_i|}\in \omega}\vol (C_i)
\end{equation}
holds for all $\omega\subset\sph^{n-1}$. As in the case of the surface area measures, it is natural to ask for necessary and sufficient conditions on a Borel measure $\mu$ to be the cone volume measure of a convex body $K\in\Knull$, which leads to the log-Minkowski problem. For \emph{even} measures, i.e., measures $\mu$ with $\mu(-\omega) = \mu(\omega)$, this question was settled by B\"or\"oczky, Lutwak, Yang and Zhang in \cite{boroczky2013logarithmic}. They showed that an even non-zero finite Borel measure $\mu$ on $\sph^{n-1}$ can be represented as $\mu=\conevol{K}$ for some $K\in\Ksn$, if and only if $\mu$ satisfies the (linear) {subspace concentration condition}, i.e., we have
  \begin{equation}
                 \mu(L \cap \sph^{n-1})\leq  \frac{\dim (L)}{n} \mu(\sph^{n-1}), 
   \label{eq:scc}
  \end{equation}
  for all proper linear subspaces $L\subset\R^n$, and whenever equality
  holds in \eqref{eq:scc} for some $L$ then there exists a complementary linear subspace
  $L'$ such that $\mu$ is concentrated on $(L\cup
  L')\cap \sph^{n-1}$. 

% \cite{boroczky2019lp, boroczky2018subspace,boroczky2019dual, ELLER2023102581, chen2018planar, huang2016geometric, lutwak2018lp, zhao2017dual} or the $L_p$ surface area measures \cite{}. \ke{Soll man hier wirklich so weit ausholen?}
% Böröczky, Lutwak,
% Yang and Zhang solved in \cite{boroczky2013logarithmic} the sufficiency part of the even logarithmic Minkowski problem. 
% A Borel measure $\mu$ is called even when $\mu(\omega)=\mu (-\omega)$ holds for all Borel sets $\omega \subset \sph^{n-1}$.
% \begin{theo}[\cite{boroczky2013logarithmic}] \label{thmmain: sci cone volume} Let  $\mu$ be  an {\em even} non-zero finite  Borel
%   measure on $\sph^{n-1}$. Then there exists a convex body $K\in\Ksn$ such that
%   $\mu=\conevol{K}$ if and only if
%   \begin{equation}
%                  \mu(L \cap \sph^{n-1})\leq  \frac{\dim L}{n} \mu(\sph^{n-1}), 
%    \label{eq:scc}
%   \end{equation}
%   for all proper linear subspaces $L\subset\R^n$, and whenever equality
%   holds in \eqref{eq:scc} for some $L$ then there exists a complementary subspace
%   $L'$ such that $\mu$ is concentrated on $(L\cup
%   L')\cap \sph^{n-1}$.
% \label{thm:conesym}  
% \end{theo}

In the general case, the log-Minkowski problem is still open, although further results were obtained in recent years \cite{ boroczky2022log, BHZ, boroczky2016cone, boroczky2014conevolume, chen2019logarithmic, henk2014cone, zhu2014logarithmic}. For an overview on the state of the art regarding the cone volume measure and related problems we refer to the survey article \cite{Boroczky2022}.
In particular, it was shown by Henk and Linke in \cite{henk2014cone} that the cone volume measures of centered polytopes satisfy the subspace concentration condition. This was generalized to arbitrary centered convex bodies B\"or\"oczky  and Henk in \cite{boroczky2016cone}. We state the inequality here.
\begin{theo}[\cite{boroczky2016cone}] \label{thmmain: sci cone volume} Let $K \in \Kcn$, then
  \begin{equation}
  \label{eq: aff conc cond}
                 \conevol{K}(L \cap \sph^{n-1})\leq  \frac{\dim (L)}{n} \conevol{K}(\sph^{n-1}), 
  \end{equation}
  holds for all linear subspaces $L\subset\R^n$.
\label{thm:conesym}  
\end{theo}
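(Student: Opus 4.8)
The plan is to peel off, by a Fubini-type fibration of $K$ over its orthogonal projection onto $L$, the dependence on the directions in $L^\perp$, so that the inequality collapses to a statement about a single "fibre-volume" function on a $k$-dimensional body, and then to deduce that statement from the Brunn--Minkowski inequality together with the centroid condition. Concretely, fix a proper $k$-dimensional subspace $L\subset\R^n$, write $\pi\colon\R^n\to L$ for the orthogonal projection, $\bar K=\pi(K)$ (a $k$-dimensional convex body with $0\in\inte\bar K$ since $0\in\inte K$), and for $z\in\bar K$ put $K_z=K\cap(z+L^\perp)$ and $V(z)=\haus^{n-k}(K_z)$. Fubini gives $\vol_n(K)=\int_{\bar K}V(z)\,\dint z$, the centredness of $K$ gives $\int_{\bar K}zV(z)\,\dint z=0$, and Brunn--Minkowski gives that $V^{1/(n-k)}$ is concave on $\bar K$.

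Next I would identify $\conevol{K}(L\cap\sph^{n-1})$ as a fibred volume. Set $\Xi=\{x\in\regbd(K):\nu_K(x)\in L\}$, so that $\conevol{K}(L\cap\sph^{n-1})=\tfrac1n\int_{\Xi}\langle x,\nu_K(x)\rangle\,\dint\haus^{n-1}$; reading each boundary element as the base of a cone with apex $0$ (the integral form of \eqref{eq:cone_vol_polytope}) and using $0\in\inte K$, this equals $\vol_n(\widehat\Xi)$ with $\widehat\Xi=\{tx:x\in\Xi,\ t\in[0,1]\}$. One checks that $\Xi$ lies over $\bd\bar K$: if $\nu_K(x)\in L$ then $\nu_K(x)=\pi(\nu_K(x))$ is an outer unit normal of $\bar K$ at $\pi(x)$, forcing $\pi(x)\in\bd\bar K$; conversely, for $\haus^{k-1}$-almost every (smooth) $y\in\bd\bar K$ the whole fibre $K_y$ lies in $\bd K$ and carries the single unit normal $(\nu_{\bar K}(y),0)\in L$, so the fibre of $\Xi$ over $y$ is $K_y$ up to a null set. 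Since $\widehat\Xi$ is swept radially from the origin, its fibre over $z\in\bar K\setminus\{0\}$ is the rescaled boundary fibre $\tfrac{|z|}{|z^{\ast}|}K_{z^{\ast}}$, where $z^{\ast}$ is the point in which the ray $\R_{>0}z$ meets $\bd\bar K$; passing to polar coordinates in $L$ then yields $n\,\conevol{K}(L\cap\sph^{n-1})=\int_{\bd\bar K}V(z)\,\langle z,\nu_{\bar K}(z)\rangle\,\dint\haus^{k-1}(z)$.

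Applying the divergence theorem on $\bar K$ to the vector field $z\mapsto V(z)\,z$ (legitimate since $V^{1/(n-k)}$ concave makes $V$ locally Lipschitz in $\inte\bar K$ and continuous on $\bar K$) gives $\int_{\bd\bar K}V\langle z,\nu_{\bar K}\rangle\,\dint\haus^{k-1}=k\int_{\bar K}V\,\dint z+\int_{\bar K}\langle z,\nabla V(z)\rangle\,\dint z=k\vol_n(K)+\int_{\bar K}\langle z,\nabla V(z)\rangle\,\dint z$. Hence the asserted inequality \eqref{eq: aff conc cond} is equivalent to
\[
  \int_{\bar K}\langle z,\nabla V(z)\rangle\,\dint z\ \le\ 0 .
\]

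The analytic core, and the step I expect to be the genuine obstacle, is to prove $\int_{\bar K}\langle z,\nabla V(z)\rangle\,\dint z\le 0$ for any non-negative $V$ on a convex body $\bar K\subset L$ with $0\in\inte\bar K$, $V^{1/(n-k)}$ concave and $\int_{\bar K}zV(z)\,\dint z=0$; this is exactly where centredness is used, and the inequality (hence the theorem) is false for general $K\in\Knull$. Writing $\langle z,\nabla V(z)\rangle$ as a radial derivative, the integral splits into contributions of the one-dimensional profiles $s\mapsto V(s\theta)$ along the chords of $\bar K$ through the origin, and along each chord the concavity of $V^{1/(n-k)}$ bounds $V$ from below by its ``conical'' profile through $0$. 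The difficulty is that the first-moment condition $\int_{\bar K}zV=0$ is a vector identity coupling the directions $\theta$ and does not localise to single chords, so no purely per-direction estimate will do; one is led to reduce, after the slicing, to a one-dimensional inequality for $m$-concave densities with vanishing first moment — of the type: if $f\ge 0$, $f^{1/m}$ is concave on $[-a,b]$ and $\int_{-a}^{b}sf(s)\,\dint s=0$, then $b f(b)+a f(-a)\le\int_{-a}^{b}f$ — combined with an averaging over $\theta$ that is controlled precisely by the centroid. The equality case should force each boundary fibre of $K$ to be, up to scaling, a translate of the central one and $\bar K$ to be centred, i.e. $K=\bar K\times C$ with $C\subset L^\perp$, which is consistent with the equality description in the linear subspace concentration condition.
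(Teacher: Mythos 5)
Your proposal is not a proof of the statement: after a chain of reductions that is essentially sound, the entire content of the theorem is left unproven at the step you yourself flag as ``the genuine obstacle''. The fibration over $\bar K=\pi(K)$, the identification $n\,\conevol{K}(L\cap\sph^{n-1})=\int_{\bd\bar K}V(z)\,\langle z,\nu_{\bar K}(z)\rangle\,\dint\haus^{k-1}(z)$ (which already needs an Eilenberg/coarea-type argument to discard the non-regular boundary points fibrewise, and some care in the divergence theorem since $V$ is only locally Lipschitz in $\inte\bar K$ with possibly unbounded gradient near $\bd\bar K$), and the divergence theorem merely translate \eqref{eq: aff conc cond} into the equivalent statement $\int_{\bar K}\langle z,\nabla V(z)\rangle\,\dint z\le 0$ for $V\ge 0$ with $V^{1/(n-k)}$ concave and $\int_{\bar K}zV(z)\,\dint z=0$. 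This reformulation carries the full strength of the theorem: every such pair $(\bar K,V)$ is realized by a centered body of the form $\{(z,w)\colon z\in\bar K,\ w\in V(z)^{1/(n-k)}B\}$ with $B$ an origin-symmetric body in $L^\perp$, so nothing is gained unless that inequality is actually established. Your sketch for it --- a one-dimensional lemma for $m$-concave densities with vanishing first moment, combined with ``an averaging over $\theta$ controlled by the centroid'' --- is not carried out, and, as you concede, the centroid condition is a vector identity that does not localize to chords through the origin, so the per-chord concavity bound cannot be combined with the one-dimensional lemma in any way you specify. That missing step is precisely where the substance of the result of B\"or\"oczky and Henk lies, so the proposal is an (informative) reformulation, not a proof.

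For context: the paper itself does not prove this theorem; it is quoted from \cite{boroczky2016cone} and used as a black box. The proof environment in Section \ref{sec : main} that carries this theorem's number is mislabeled --- it is in fact the proof of Theorem \ref{theo: maintheo}, lifting $K$ to the pyramids $K^{(j)}$ and then \emph{invoking} the linear subspace concentration inequality for the centered bodies $K^{(j)}$; read as a proof of Theorem \ref{thmmain: sci cone volume} it would be circular. So the appropriate benchmark for your attempt is the original argument in \cite{boroczky2016cone} (and \cite{henk2014cone} for polytopes), and relative to that your proposal stops short of the decisive analytic step.
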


Recently, Wu formulated an affine version of the subspace concentration condition for polytopes \cite{wu2023affine}: Again, let $P=\{x\in\R^n \colon \langle a_i,x\rangle \leq 1,~1\leq i\leq m\}$, where each vector $a_i$ defines a facet of $P$, and let $C_i$, $1\leq i\leq m$, be the corresponding cones in $P$. Then $P$ is said to satisfy the affine subspace concentration condition, if
 \begin{equation}
 \label{eq:affine_sc_poly}
      \sum_{i : a_i \in A} \vol (C_i) \leq \frac{\dim (A)+1}{n+1} \vol(P)
 \end{equation}
 holds for all proper affine subspaces $A\subset \R^n$ and, moreover, equality holds, if and only if there exists a complementary affine subspace $A'\subset \R^n$ with $\{a_1,\dots,a_m\}\subset A\cup A'$. Here, the term ``complementary'' is meant in the sense of affine spaces, i.e., $A\cap A' = \emptyset$ and $\aff(A\cup A')=\R^n$.

In \cite{wu2023affine} the affine subspace concentration condition is proven for polytopes that are in addition smooth and reflexive lattice polytopes. We shall not give the definition here, but refer to \cite{wu2023affine} instead. It seems that the extra condition of $P$ being a smooth reflexive lattice polytope might not be necessary. In \cite{FHK} it is shown that the inequality \eqref{eq:affine_sc_poly} holds for all centered polytopes and all affine subspaces. However, the equality cases remain open in general.

Note that the normalization of the inequalities $\langle a_i, x\rangle\leq 1$ determining $P$ to have right hand side 1 is crucial in the affine subspace concentration condition. Renormalizing the vectors $a_i$ would create new affine dependencies and thus a different type of affine subspace concentration condition. In our setting the normalization is natural in that the vectors $a_i$ are the vertices of the polar polytope $P^\star$. Since the cone volume measure $V_P$ is supported on the vectors $\tfrac{a_i}{|a_i|}$, we can restate \eqref{eq:affine_sc_poly} in terms of the cone volume measure as follows:
\begin{equation}
    \label{eq:affine_sc_poly2}
    \conevol{P}(\pi (\bd(P^\star) \cap A)) \leq \frac{\dim (A)+1}{n+1}\,\conevol{P}(\sph^{n-1}),
 \end{equation}
where $\pi\colon\R^n\setminus\{0\}\to\sph^{n-1}$ denotes the central projection to the sphere.
Presented this way, the inequality could be stated for any convex body in $K\in\Knull$, where $K^\star = \{a\in\R^n \colon \langle a,x\rangle \leq 1,~\forall x\in K\}$ is the polar convex body. We show that it indeed extends to centered convex bodies.

\begin{theo} \label{theo: maintheo}
    Let $K \in \Knull$ be centered and let $A \subset \R^n$ be an affine subspace. Then it holds
\begin{equation}
    \label{eq:affine_sc_conv2}
    \conevol{K}(\pi (\bd(K^\star) \cap A)) \leq \frac{\dim( A)+1}{n+1}\conevol{K}(\sph^{n-1}).
 \end{equation}
\end{theo}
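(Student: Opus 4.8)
My strategy is to reduce the convex-body statement to the polytope case from \cite{FHK} by a suitable approximation argument, after first recasting the left-hand side in a more tractable geometric form. The quantity $\conevol{K}(\pi(\bd(K^\star)\cap A))$ is, by the definition of the cone volume measure and of the central projection, the cone volume of $K$ associated with those regular boundary points $x\in\regbd(K)$ whose outer normal direction $\nu_K(x)$, viewed as a ray from the origin, meets $\bd(K^\star)$ inside $A$. Equivalently, writing $K^\star=\{a\colon\ip{a}{x}\le 1~\forall x\in K\}$, a unit vector $u$ lies in $\pi(\bd(K^\star)\cap A)$ precisely when the point $u/h_K(u)\in\bd(K^\star)$ lies in $A$, where $h_K$ is the support function. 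So the first step is to make this identification precise and to observe that the set of such $u$ is $\sph^{n-1}\cap\pi(A\cap\bd(K^\star))$, a measurable subset of the sphere, so the left-hand side is well defined.

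The second and main step is approximation. Choose polytopes $P_k\to K$ in the Hausdorff metric with $P_k\in\Knull$; by a standard construction one can take the $P_k$ to be inscribed or circumscribed so that $P_k^\star\to K^\star$ as well, and moreover one can arrange that the centroids $\cen(P_k)\to \cen(K)$. Since $K$ is centered, a small translation $P_k'=P_k-\cen(P_k)$ gives centered polytopes still converging to $K$, with $(P_k')^\star\to K^\star$. For each $k$, \eqref{eq:affine_sc_poly2} (the centered-polytope case, which is \eqref{eq:affine_sc_poly} of \cite{FHK} restated via polarity) gives
\begin{equation*}
\conevol{P_k'}\bigl(\pi(\bd((P_k')^\star)\cap A)\bigr)\le \frac{\dim(A)+1}{n+1}\,\conevol{P_k'}(\sph^{n-1}).
\end{equation*}
The right-hand side converges to $\tfrac{\dim(A)+1}{n+1}\vol_n(K)$ because $\conevol{P_k'}(\sph^{n-1})=\vol_n(P_k')\to\vol_n(K)$. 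The crux is the left-hand side: I need lower semicontinuity, i.e.
\begin{equation*}
\liminf_{k\to\infty}\conevol{P_k'}\bigl(\pi(\bd((P_k')^\star)\cap A)\bigr)\ \ge\ \conevol{K}\bigl(\pi(\bd(K^\star)\cap A)\bigr).
\end{equation*}

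The main obstacle is precisely this semicontinuity, because the cone volume measure does \emph{not} vary continuously with $K$ in the sense one would naively want on arbitrary Borel sets — mass can escape to the boundary of the tested set under perturbation — and the set $\pi(\bd(K^\star)\cap A)$ itself moves with $K$. The plan to handle this: exploit that $A$ is a \emph{closed} affine subspace, so that $\bd(K^\star)\cap A$ is closed, and show that for any point $u$ lying in the \emph{relative interior} (within $\bd(K^\star)$) of $\bd(K^\star)\cap A$ — equivalently, $u/h_K(u)$ lies in $\aff(A)$ and is an interior point of $K^\star$ in directions transverse to $A$ is not needed; what is needed is that the supporting hyperplane of $K$ at $u$ is unaffected under small perturbation — the contact point persists for the approximating polytopes. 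More robustly, I would instead prove the inequality first for the generating measure: the cone volume measure is weakly continuous in $K$ (this is classical, as $\conevol{K}$ equals $\tfrac1n h_K\,\dint S_K$ with $S_K$ the surface area measure, which is weakly continuous), and weak convergence of measures gives $\liminf\mu_k(U)\ge\mu(U)$ for \emph{open} sets $U$. So I replace $\pi(\bd(K^\star)\cap A)$ by a slightly shrunk open neighbourhood-complement: let $A_\varepsilon$ be the set of unit directions $u$ with $\operatorname{dist}(u/h_K(u),A)<\varepsilon$; this is open in $\sph^{n-1}$, contains $\pi(\bd(K^\star)\cap A)$, and for $k$ large the analogous set for $P_k'$ contains $\pi(\bd((P_k')^\star)\cap A)$ up to controllable error. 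Applying the polytope inequality and then letting $k\to\infty$ and $\varepsilon\to 0$, using that $\conevol{K}(A_\varepsilon)\downarrow \conevol{K}\bigl(\bigcap_\varepsilon A_\varepsilon\bigr)$ and that $\bigcap_{\varepsilon>0}A_\varepsilon=\pi(\bd(K^\star)\cap A)$ (by closedness of $A$ and continuity of $h_K$), yields \eqref{eq:affine_sc_conv2}. The one genuinely delicate point to verify carefully is that passing from $K$ to the polytopes does not create mass on directions $u$ whose contact set with $\bd(P_k'^\star)$ drifts into $A$ only in the limit; this is where choosing $P_k'$ with $(P_k')^\star\to K^\star$ in Hausdorff distance, together with the $\varepsilon$-fattening, does the bookkeeping.
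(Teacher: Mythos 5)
Your reduction-to-polytopes strategy has a genuine gap at precisely the step you flag as "delicate", and the $\varepsilon$-fattening does not close it. The inequality of \cite{FHK} that you want to invoke bounds $\conevol{P_k'}\bigl(\pi(\bd((P_k')^\star)\cap A)\bigr)$ only for the \emph{exact} affine subspace $A$; for a generic centered approximating polytope $P_k'$ this quantity is simply $0$ even when $\conevol{K}\bigl(\pi(\bd(K^\star)\cap A)\bigr)>0$ (take $A=\{a\}$ with $K$ having a facet of outer normal $a$: a generic $P_k'$ has no polar vertex equal to $a$), so the polytope inequality for $(P_k',A)$ carries no information in the limit. Your fix replaces the test set by the open fattening $A_\varepsilon$, but then the chain of estimates runs the wrong way: weak convergence of cone volume measures gives a \emph{lower} bound $\liminf_k\conevol{P_k'}(A_\varepsilon)\geq\conevol{K}(A_\varepsilon)$, whereas what you need is an \emph{upper} bound on $\conevol{P_k'}(A_\varepsilon)$ of the form $\tfrac{\dim(A)+1}{n+1}\vol_n(P_k')+o(1)$, and \eqref{eq:affine_sc_poly} does not provide it: the polar boundary points of $P_k'$ lying $\varepsilon$-close to $A$ need not lie in any affine subspace of dimension $\dim(A)$ (their affine hull can be all of $\R^n$), so applying the polytope inequality to their affine hull only yields a trivial bound. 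The containments you record ($A_\varepsilon\supseteq\pi(\bd(K^\star)\cap A)$ and the analogous one for $P_k'$) therefore never connect the two sides of the desired inequality. A further unaddressed obstruction: after recentering, $P_k'=P_k-\cen(P_k)$ has polar vertices $a_i/(1-\langle a_i,\cen(P_k)\rangle)$, i.e.\ the polar points move radially, and exact incidences with $A$ are destroyed; as noted in the introduction, the normalization $\langle a_i,x\rangle\leq 1$ is crucial and rescaling the $a_i$ changes the affine dependencies, so even a tailored approximation whose polar vertices lie in $A$ loses this property upon centering.

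For contrast, the paper avoids approximation altogether: it lifts $K$ to a sequence of centered pyramids $K^{(j)}\subset\R^{n+j}$ (Lemma \ref{lem : basic prop pyramid}), under which the affine subspace $A$ becomes a linear subspace $L^{(j)}$ of dimension $\dim(A)+1$ while the relevant cone volume is preserved (Lemmas \ref{lem : Schneider} and \ref{lem : cone pyramid}); applying the linear subspace concentration inequality for centered convex bodies (Theorem \ref{thmmain: sci cone volume}) to $(K^{(j)},L^{(j)})$ and letting $j\to\infty$ produces the factor $\tfrac{\dim(A)+1}{n+1}$ directly, with no semicontinuity or renormalization issues. If you want to salvage an approximation argument, you would have to construct polytopes whose polar bodies retain enough vertices exactly in $A$ \emph{after} centering and show the associated cone volumes capture the mass of $K$ on $\pi(\bd(K^\star)\cap A)$ — a substantial additional construction that your proposal does not supply.
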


In the proof we make use of a lifting argument from \cite{FHK} for polytopes that we adapt to convex bodies.
Moreover, we provide characterizations of the equality case in Theorem \ref{theo: maintheo} for two special cases in Section \ref{sec:equality}.

\section{Preliminaries and Notation}\label{sec:pre}
 For a comprehensive overview of the basic concepts and definitions in convex geometry we refer to \cite{schneider_2013}. We write
$\sph^{n-1}$ for the $(n-1)$-dimensional Euclidean unit sphere, i.e., 
$\sph^{n-1}=\{ x\in\R^n : \bnorm{x} = 1 \}$.
For a convex body $K$ let $\bd(K)$ denote its boundary points.

For a given convex body $K \in \Kn$ the \emph{support function} $\suk_K:
\mathbb{R}^n \to \mathbb{R}$
is defined by
\begin{equation*} 
\suk_K(u)=\max_{x \in K} \ip{u}{x}.
\end{equation*}
The support function is continuous and positively homogeneous of degree 1. The hyperplane
\(
  H_K(u)=\{x \in \mathbb{R}^n :   \ip{u}{x}=\suk_K(u)\}
\)
is a supporting hyperplane of $K$ and for a boundary
point $v \in \bd K\cap H_K(u)$, the vector $u$ will be called an
\emph{outer normal vector}. If in addition $u\in \sph^{n-1}$ then $u$ is an
\emph{outer unit normal vector}. Let $\regbd (K)\subseteq \bd(K)$ be the set of all boundary points
having a unique outer unit normal vector, it is called the regular boundary.
We remark that  the set of boundary points not having a unique outer normal vector has measure zero, that is $\hausdorff^{n-1}(\bd (K)
\setminus \regbd (K) )=0$.
The \emph{Gauss map} (or \emph{spherical image map})
$\nu_K: \regbd( K )\to \mathbb{S}^{n-1}$ maps a point $x$ to
its unique outer unit normal vector.

 The \emph{central projection} $\pi_k : \R^k \setminus \{0\} \to \sph^{k-1}$ is given by $\pi_k(x)=\vert x \vert ^{-1} x$. We simply write $\pi$ instead of $\pi_k$ if the ambient dimension is clear from the context.

%We will make use of the well known duality between the boundary structure of a convex body $K$ and its dual $K^\star$: For a $d$-dimensional face $F\subset K$ we define the polar face
%\begin{align*}
 %   F^{\diamond}=\{ x \in K^\star \, : \, \langle x, y \rangle =1 \text{ for all }y \in K\}.
%\end{align*}
%which is itself a $(n-d-1)$-dimensional face of $K^\star$. 
%In particular, whenever $F$ is an $(n-1)$-dimensional face of $K$, the polar face $F^{\diamond}$ contains only one point of $K^\star$ - called the dual vertex of $F$ (cf. Section 2.1. and Section 2.4 in \cite{schneider_2013}).

The cone volume can also be expressed as (cf.\ Section 9.2 in \cite{schneider_2013})
\begin{align*}
    \conevol{K}(\omega)= \frac{1}{n} \int_{\omega} \suk_K(u) \dint \sura_K(u)
\end{align*}
where $ \sura_K(\omega) = \hausdorff^{n-1}(\nu^{-1}_K(\omega))$ denotes the surface area measure. We will also use a description of the cone volume measure in terms of the volume of sets that we call \emph{star pyramids}. A star pyramid $\pyr{Q}{v}$ with apex $v$ and (not necessarily convex) base $Q$ is the union of closed segments from $Q$ to $v$, i.e., \(
        \pyr{ Q}{v}= \bigcup_{x\in Q} [x,v].
    \)
If $Q$ is convex, we have $[Q,v]= \conv (Q \cup \{v\})$.
\begin{lemma} [Lemma 9.2.4 in \cite{schneider_2013}]\label{lem : Schneider}
Let $K \in \Knull$ and $\omega \subset \sph^{n-1}$ be a Borel set. Then,
    \begin{align*}
        \conevol{K}(\omega)= \hausdorff^n \left(\pyr{\nu_K^{-1}(\omega)}{0} \right).
    \end{align*}

\end{lemma}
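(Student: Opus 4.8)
The plan is to realize the star pyramid $\pyr{\nu_K^{-1}(\omega)}{0}$ as the image of a single Lipschitz map from an $n$-dimensional parameter domain and to evaluate its $\hausdorff^n$-measure via the area formula. Write $Q=\nu_K^{-1}(\omega)\subseteq\regbd(K)$ and consider the radial map
\begin{equation*}
  \Phi\colon \bd(K)\times[0,1]\to\R^n,\qquad \Phi(x,t)=t\,x.
\end{equation*}
Its restriction to $Q\times[0,1]$ sweeps out precisely the segments $[x,0]$, $x\in Q$, so that $\Phi(Q\times[0,1])=\pyr{Q}{0}$. Since $K\in\Knull$ contains the origin in its interior, every ray emanating from $0$ meets $\bd(K)$ in a single point; hence $\Phi$ is injective on $\bd(K)\times(0,1]$, the fibre $t=0$ and the segment endpoints contributing only an $\hausdorff^n$-null set.

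First I would compute the $n$-dimensional Jacobian of $\Phi$. At a point $x\in\regbd(K)$ at which $\bd(K)$ possesses a tangent hyperplane — which holds for $\hausdorff^{n-1}$-almost every boundary point of a convex body — choose tangent vectors $v_1,\dots,v_{n-1}$ to $\bd(K)$. The differential of $\Phi$ sends these to $t\,v_1,\dots,t\,v_{n-1}$ and the $t$-direction to $\partial_t(t\,x)=x$, so relative to the surface measure on $\bd(K)$ the Jacobian equals
\begin{equation*}
  J_\Phi(x,t)=t^{\,n-1}\,\bigl|\det[\,v_1\,|\cdots|\,v_{n-1}\,|\,x\,]\bigr|
            =t^{\,n-1}\,\ip{x}{\nu_K(x)},
\end{equation*}
because adding the tangential part of $x$ leaves the parallelepiped volume unchanged, so only the normal component survives, and $\ip{x}{\nu_K(x)}=\suk_K(\nu_K(x))>0$ as the origin lies in the interior of $K$.

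With this in hand, the area formula for Lipschitz maps, together with the injectivity noted above, yields
\begin{equation*}
  \hausdorff^n\!\left(\pyr{Q}{0}\right)
  =\int_Q\!\int_0^1 t^{\,n-1}\,\ip{x}{\nu_K(x)}\,\dint t\,\dint\hausdorff^{n-1}(x)
  =\frac1n\int_{\nu_K^{-1}(\omega)}\ip{x}{\nu_K(x)}\,\dint\hausdorff^{n-1}(x),
\end{equation*}
and the right-hand side is exactly $\conevol{K}(\omega)$ by its definition.

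The main obstacle is the regularity required to justify the Jacobian computation and the application of the area formula, since $\bd(K)$ is in general only Lipschitz rather than $C^1$. I would resolve this by invoking that the boundary of a convex body is $(n-1)$-rectifiable and admits a tangent hyperplane, and hence a well-defined Gauss map $\nu_K$, at $\hausdorff^{n-1}$-almost every point; the displayed Jacobian then holds a.e., and the area formula applies to the Lipschitz map $\Phi$ on the rectifiable domain $\bd(K)\times(0,1)$. Alternatively, one could first establish the identity for bodies with $C^2_+$ boundary, where the computation is classical, and then pass to a general $K\in\Knull$ by approximation $K_j\to K$ in the Hausdorff metric, using the weak continuity of the cone volume measure and the continuity of $\hausdorff^n(\pyr{\cdot}{0})$ under such convergence.
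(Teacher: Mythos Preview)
The paper does not give its own proof of this lemma; it simply quotes it as Lemma~9.2.4 in Schneider's textbook. Your argument via the radial parametrization $\Phi(x,t)=tx$ on $\bd(K)\times[0,1]$, the Jacobian computation $J_\Phi(x,t)=t^{n-1}\ip{x}{\nu_K(x)}$, and the area formula is correct and is essentially the standard proof; the regularity concerns you raise are handled exactly as you indicate, by the $(n-1)$-rectifiability of $\bd(K)$ and the $\hausdorff^{n-1}$-a.e.\ existence of the tangent hyperplane.
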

\section{Affine subspace concentration for centered convex bodies} \label{sec : main}

% First we verify that our notion of affine subspace concentration for general convex bodies coincides with the previously introduced notion for polytopes from \cite{FHK, wu2023affine} as claimed in the introduction.
% To this end let $P$ be an $n$-dimensional polytope with $0$ in its interior given by its unique representation
% \begin{align*}
%     P= \{ x \in \R^n \, : \, \langle a_i, x \rangle \leq 1, \, 1 \leq i \leq m \}
% \end{align*}
% with pairwise different $a_i \in \R^n \setminus \{0 \}$, facets $F_i=P \, \cap \, \{x \in \R^n \, : \, \langle a_i, x \rangle =1 \}$ and cones $C_i= \operatorname{conv}(\{0\}\cup F_i)$ for $1 \leq i \leq m$. Then we have $P^{\star}= \operatorname{conv}(\{a_1, \dotsc, a_m\})$ and so, 
% \[
% \conevol{P}(\pi(\bd (P^\star) \cap A)) =   \sum_{i \, :\, a_i \in A}\operatorname{vol}(C_i)
% \]
% follows from \eqref{eq:cone_vol_polytope}.

% \begin{align*}
%    \pi(\bd (P^\star) \cap A) \cap \operatorname{supp}( \conevol{P})= \left\{\pi (a_i)\, : \, a_i \in A \right\}.
% \end{align*}
% We conclude
% \begin{align*}
% &\conevol{P}(\pi(\bd (P^\star) \cap A)) \\
% &=\frac{1}{n} \int_{\pi(\bd (P^\star) \cap A)} \suk_P(u) \dint \sura_P(u) \\
% &= \frac{1}{n} \sum_{i \, : \, a_i \in A} \suk_P \left (\pi(a_i)\right)  \sura_P\left(\left \{\pi(a_i)\right \}\right) \\
% &= \frac{1}{n} \sum_{i \, :\, a_i \in A}\frac{1}{\vert a_i \vert}\suk_P \left (a_i\right)  \hausdorff^{n-1}\left(F_i\right) \\
% &=  \sum_{i \, :\, a_i \in A}\operatorname{vol}(C_i).
% \end{align*} 

The volume formula for pyramids extends to star pyramids as introduced in Section \ref{sec:pre}.
\begin{lemma} \label{lem : pyramid formula}
    Let $Q \subset \R^{n-1}$, $h \neq 0$. Then it holds
    \begin{align*}
        \hausdorff^n(\pyr{Q\times \{h\}}{0})=\frac{h}{n} \hausdorff^{n-1}(Q).
    \end{align*}
\end{lemma}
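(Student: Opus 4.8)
The plan is to reduce the statement to the elementary fact that an $n$-dimensional cone over a base of $(n-1)$-dimensional measure $m$ and "height" $h$ has volume $hm/n$, and to do this by slicing the star pyramid $\pyr{Q\times\{h\}}{0}$ by hyperplanes parallel to the hyperplane $\{x_n = h\}$ containing the base. First I would set up coordinates: write points of $\R^n$ as $(y,t)$ with $y\in\R^{n-1}$, $t\in\R$, so the apex is $(0,0)$ and the base lies in $\{t=h\}$. For $t$ strictly between $0$ and $h$ (assume $h>0$ first; the case $h<0$ follows by reflection, or one reads $h$ as $|h|$), the slice of $\pyr{Q\times\{h\}}{0}$ at level $t$ is exactly the scaled copy $\tfrac{t}{h}(Q\times\{h\})$ projected to the $y$-coordinates, i.e.\ $\{\,y : \tfrac{h}{t}y \in Q\,\} = \tfrac{t}{h}Q$. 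This is because a point $(y,t)$ lies on a segment $[(x,h),(0,0)]$ iff $(y,t)=\lambda(x,h)$ for some $\lambda\in[0,1]$, forcing $\lambda = t/h$ and $y=\tfrac{t}{h}x$ with $x\in Q$.

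Next I would invoke Fubini's theorem for the Hausdorff/Lebesgue measure $\haus^n$ on $\R^n = \R^{n-1}\times\R$, writing
\begin{equation*}
  \haus^n(\pyr{Q\times\{h\}}{0}) = \int_0^h \haus^{n-1}\!\left(\tfrac{t}{h}Q\right) dt = \int_0^h \left(\tfrac{t}{h}\right)^{n-1}\haus^{n-1}(Q)\, dt = \haus^{n-1}(Q)\cdot\frac{h}{n},
\end{equation*}
using the scaling behaviour $\haus^{n-1}(\lambda Q) = \lambda^{n-1}\haus^{n-1}(Q)$ and then the elementary integral $\int_0^h (t/h)^{n-1}\,dt = h/n$. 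For general $h\neq 0$ one replaces $h$ by $|h|$ in the slicing range and in the final formula, which matches the stated identity once $h$ is understood to be positive (or one takes absolute values throughout).

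The one point that needs a little care — and which I expect to be the only real obstacle — is the \emph{measurability} of $Q$ and of the star pyramid: since $Q$ is allowed to be non-convex, one should note that $\pyr{Q\times\{h\}}{0}$ is the image of $(Q\times\{h\})\times[0,1]$ under the continuous map $((x,h),\lambda)\mapsto \lambda(x,h)$, hence it is $\haus^n$-measurable whenever $Q$ is $\haus^{n-1}$-measurable (e.g.\ analytic or Borel), and the slices $\tfrac{t}{h}Q$ inherit measurability from $Q$; in the applications $Q = \nu_K^{-1}(\omega)$ for a Borel set $\omega$, so this is harmless. With measurability in hand, the slicing identity plus Fubini plus the scaling of $(n-1)$-dimensional measure gives the result immediately, with no further computation.
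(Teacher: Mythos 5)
Your argument is essentially identical to the paper's proof: both slice the star pyramid by hyperplanes $\{x_n=t\}$, identify the slice as $\tfrac{t}{h}Q$, and integrate the $(n-1)$-homogeneous scaling via Cavalieri/Fubini to get $\tfrac{h}{n}\hausdorff^{n-1}(Q)$. Your added remarks on measurability of $\pyr{Q\times\{h\}}{0}$ and on reading $h$ as $|h|$ when $h<0$ are sensible refinements of points the paper leaves implicit, but the core argument is the same.
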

\begin{proof}
Note that it holds $\pyr{Q\times \{h\}}{0} \cap \{x \in \R^n  :  x_n=t\} = \frac{t}{h}Q\times\{t\}$. Then we obtain by Cavalieri's principle and the translation invariance of the Hausdorff measure
\begin{align*}
     \hausdorff^n(\pyr{Q\times \{h\}}{0})&= \int_0^h \hausdorff^{n-1}\left(\pyr{Q\times \{h\}}{0} \cap \{x \in \R^n  :  x_n=t\}\right) \dint t \\
    &= \int_0^h \left (\frac{t}{h} \right)^{n-1} \hausdorff^{n-1}( Q) \dint t= \frac{h}{n} \hausdorff^{n-1}(Q).\qedhere
\end{align*}
\end{proof}

% Lemma 2.1 in \cite{FHK} about centroids of pyramids also holds true for star pyramids without changing the proof.

% \begin{lemma} \label{ lem : centroid}
%      Let $Q \subset \R^{n}$ with $\dim ( \operatorname{aff}(Q))=n-1$, $v \in \R^n \setminus \operatorname{aff}(Q)$. Then it holds
%     \begin{align*}
%        c( \pyr{Q}{v})= \frac{n}{n+1}c(Q)+\frac{1}{n+1}v.
%     \end{align*}
% \end{lemma}

 % Following the idea of \cite{FHK} we will now recursively define a sequence of pyramids and collect important properties.
 For a set $Q \subset \R^{k}$ we will denote
\(\widehat{Q} \coloneq Q \times \{1\} \subset \R^{k+1}\) or $(Q)^{\wedge}$ for longer arguments.
 For $K \in \Knull$ and $j\geq 1$ we define pyramids by \begin{equation} \label{eq : pyramid defi}
     K^{(j)}=\pyr{(K^{(j-1)})^{\wedge}}{-(n+j)e_{n+j}}\end{equation} where $K^{(0)}=K$. The pyramids $K^{(j)}$ are convex bodies in $\R^{n+j}$. 

    \begin{lemma} \label{lem : basic prop pyramid}
    Let $K \in \Knull$. Then the following holds:
    \begin{enumerate}
    \item If $K$ is centered, $K^{(1)}$ is centered.
    % \item $\regbd (K^{(1)})
    %    =\operatorname{relint}(\widehat K) \cup \pyr{\, (\regbd K)^{\wedge}}{-(n+1)e_{n+1}}$.
    %    %= &\operatorname{relint}(K \times \{1\}) \cup \{ \lambda x +(1-\lambda)(-(n+1)e_{n+1}) \, : \, x \in (\regbd K) \times  \{1\}, \lambda \in [0,1]\}
       \item $\operatorname{vol}_{n+1}(K^{(1)})= \frac{n+2}{n+1}\operatorname{vol}_n(K)$.

       \item If $y\in \bd (K^\star)$ is an outer normal vector of $x\in K$, then $(\tfrac{n+2}{n+1}y,\, -\tfrac{1}{n+1})^\top\in \bd ((K^{(1)})^\star)$ is an outer normal vector of $K^{(1)}$ at each point in $[(x,1)^\top, -(n+1)e_{n+1}]$.

       \item A regular boundary point of $K^{(1)}$ is either in the relative interior of $\widehat K$, or of the form $\mu (x,1)^\top - (1-\mu)(n+1)e_{n+1}$ for some $x \in \regbd (K)$, $\mu\in (0,1)$.
    \end{enumerate}
\end{lemma}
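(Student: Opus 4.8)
The plan is to handle the four assertions in turn. Assertions~(1) and~(2) will come from slicing $K^{(1)}$ by horizontal hyperplanes and Cavalieri's principle (for~(2) one can also just invoke Lemma~\ref{lem : pyramid formula}); assertion~(3) is a short computation; and assertion~(4) requires a description of $\bd(K^{(1)})$ together with a count of outer unit normals. The main obstacle is~(4).

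I first record the elementary geometry of $K^{(1)}$. As $\widehat K$ is convex, $K^{(1)}=\conv\big(\widehat K\cup\{-(n+1)e_{n+1}\}\big)$, so every point of $K^{(1)}$ equals $\mu\,(x,1)^\top+(1-\mu)\big(-(n+1)e_{n+1}\big)$ for some $x\in K$ and $\mu\in[0,1]$, and its last coordinate is $(n+2)\mu-(n+1)$. Hence, for $t\in[-(n+1),1]$, the horizontal slice $K^{(1)}\cap\{x_{n+1}=t\}$ equals $\lambda_t K\times\{t\}$ with $\lambda_t=\tfrac{t+n+1}{n+2}\in[0,1]$. Cavalieri's principle then gives
\[
\vol_{n+1}(K^{(1)})=\int_{-(n+1)}^{1}\lambda_t^{\,n}\,\vol_n(K)\,\dint t=\frac{n+2}{n+1}\,\vol_n(K),
\]
which is~(2); equivalently,~(2) follows from Lemma~\ref{lem : pyramid formula} in $\R^{n+1}$ after translating $K^{(1)}$ by $(n+1)e_{n+1}$. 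For~(1), the first $n$ coordinates of $\cen(K^{(1)})$ are a positive multiple of $\int_{-(n+1)}^{1}\lambda_t^{\,n}\cen(K)\,\dint t=0$ because $K$ is centered, while the last coordinate is a positive multiple of $\int_{-(n+1)}^{1}t\,\lambda_t^{\,n}\,\dint t$, which vanishes after the substitution $t=(n+2)\mu-(n+1)$; this is exactly why the apex height $-(n+1)$ is chosen as it is. (Alternatively,~(1) follows from the standard identity $\cen(\conv(B\cup\{v\}))=\tfrac{d}{d+1}\cen(B)+\tfrac{1}{d+1}v$ for a $d$-dimensional cone with apex $v$ over a base $B$.)

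Assertion~(3) is a direct computation. Since $y\in\bd(K^\star)$ is an outer normal of $K$ at $x$, we have $\ip{y}{x}=\suk_K(y)=1$ and $\ip{y}{x'}\le1$ for all $x'\in K$. With $\bar y=\big(\tfrac{n+2}{n+1}y,\,-\tfrac{1}{n+1}\big)^\top$ one gets $\ip{\bar y}{(x,1)^\top}=\tfrac{n+2}{n+1}-\tfrac{1}{n+1}=1$ and $\ip{\bar y}{-(n+1)e_{n+1}}=1$, so $\ip{\bar y}{\cdot}$ is identically $1$ on the segment $[(x,1)^\top,-(n+1)e_{n+1}]$; moreover $\ip{\bar y}{(x',1)^\top}=\tfrac{n+2}{n+1}\ip{y}{x'}-\tfrac{1}{n+1}\le1$ for $x'\in K$, hence $\ip{\bar y}{\cdot}\le1$ on the generators of $K^{(1)}$ and thus on all of $K^{(1)}$. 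Therefore $\suk_{K^{(1)}}(\bar y)=1$ and the set $\{z\in K^{(1)}:\ip{\bar y}{z}=1\}$ contains that segment, which is exactly the claim that $\bar y\in\bd\big((K^{(1)})^\star\big)$ is an outer normal of $K^{(1)}$ at each of its points.

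For~(4) the slice picture lets one read off $\bd(K^{(1)})$ as the union of the ``top'' $\widehat K$, the apex $-(n+1)e_{n+1}$, and the ``lateral'' points $\mu(x,1)^\top+(1-\mu)\big(-(n+1)e_{n+1}\big)$ with $x\in\bd K$ and $\mu\in(0,1)$ (these being the points $(\lambda_t x,t)$ with $x\in\bd K$, $-(n+1)<t<1$; the analogous points with $x\in\inte K$ lie in $\inte K^{(1)}$). I then rule out every unwanted regular point by exhibiting two non-parallel outer normals, with~(3) producing the ``lateral'' ones. The vector $e_{n+1}$ is an outer normal of $K^{(1)}$ along all of $\widehat K$ (as $x_{n+1}\le1$ on $K^{(1)}$ with equality on $\widehat K$); for $x\in\bd K$, choosing $y\in\bd(K^\star)$ supporting $K$ at $x$, the vector $\bar y$ of~(3) — whose first $n$ coordinates do not all vanish — is a second outer normal at $(x,1)^\top$, so no point of $\bd K\times\{1\}$ is regular. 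On the other hand, a point $(x_0,1)^\top$ with $x_0\in\inte K$ has $e_{n+1}$ as its only outer unit normal (any outer normal there must have vanishing first $n$ coordinates because $x_0\in\inte K$, and then the apex forces the last coordinate to be positive), so it is regular and lies in the relative interior of $\widehat K$. The apex carries both $-e_{n+1}$ and $(y_0,-1)^\top$ (for any $y_0\in\bd(K^\star)$) as outer normals, hence is not regular. Finally, for a lateral point $p=\mu(x,1)^\top+(1-\mu)\big(-(n+1)e_{n+1}\big)$ with $\mu\in(0,1)$: if $x\notin\regbd(K)$, then $K$ has two distinct outer normals $y_1\neq y_2$ in $\bd(K^\star)$ at $x$, so by~(3) both $\bar y_1$ and $\bar y_2$ are outer normals of $K^{(1)}$ at $p$, and they are not positive multiples of each other since each has last coordinate $-\tfrac{1}{n+1}$; thus $p$ is not regular. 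Collecting the cases gives exactly the dichotomy in~(4); the main effort is the identification of $\bd(K^{(1)})$ and the interior-of-facet argument.
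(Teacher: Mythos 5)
Your proof is correct and follows essentially the same route as the paper: Cavalieri's principle for (i), the pyramid volume formula for (ii), the identical support--function computation for (iii), and (iv) deduced from (iii) --- which the paper dismisses as ``a direct consequence'' and you spell out via the boundary decomposition of $K^{(1)}$ and the count of non-parallel outer normals. (One cosmetic remark: in (i) the correct weight on the slice centroid is $\lambda_t^{\,n+1}$ rather than $\lambda_t^{\,n}$, which is immaterial here since $\cen(K)=0$.)
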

\begin{proof}
The first statement is another application of Cavalieri's principle (see \cite[Lemma 2.1]{FHK} for details). 
% For ii) it suffices to note that $a$ is an outer unit normal vector of $K$ at $x\in K$ if and only if $(a,-\tfrac{1}{n+2})^\top\in\R^{n+1}$ is an outer unit normal vector of $K^{(1)}$ at $(x,t)^\top$ for all $t\in [-(n+1),1]$.
The second statement follows directly from Lemma \ref{lem : pyramid formula}.

For the third statement let $\overline y = (\tfrac{n+2}{n+1}y,\, -\tfrac{1}{n+1})^\top$ and $\hat z = (z,1)^\top$, where $z\in K$ as well as $\mu \in [0,1]$. Then we have
\[
\langle \overline y, \mu\hat z - (1-\mu)(n+1)e_{n+1}\rangle = \mu\frac{n+2}{n+1}\langle y, z\rangle - \mu \frac{1}{n+1} + (1-\mu).
\]
Since $y\in \bd (K)$ we see that 
\[
\langle \overline y, \mu\hat z - (1-\mu)(n+1)e_{n+1}\rangle \leq 1
\]
with equality for $z=x$, which proves the claim. The last statement is a direct consequence of iii).
\end{proof}

   To keep track of the part of the boundary corresponding to an affine subspace $A \subset \R^n$ within the sequence of pyramids $K^{(j)}$ we introduce the following notation for $j \geq 1$
    \begin{equation}\label{eq:F}
        \F{K}{A}{j}= \pyr{\, (\F{K}{A}{j-1})^{\wedge}} {-(n+j)e_{n+j}} \subset K^{(j)}
        %&=\{ \lambda x +(1-\lambda)(-(n+j)e_{n+j}) \, : \,  x \in \F{K}{A}{j-1}) \times \{1\}, \lambda \in [0,1] \}  
         \end{equation}
    where \begin{align*}
        \F{K}{A}{0}= \nu_K^{-1}(\pi_n(\bd (K^\star) \cap A)).
    \end{align*}

    The corresponding ``cones'' are star pyramids defined by 
    \begin{align*}
         \C{K}{A}{j}&= \pyr{\F{K}{A}{j}}{0} %&=\{ \lambda x \, : \,  x \in \F{K}{A}{j}, \lambda \in [0,1] \}  
    \end{align*}
    for $j \geq 0$.
\begin{lemma} \label{lem : cone pyramid}
    Let $K \in \Knull$ and $A \subset \R^n$ be an affine subspace. Then for $j \geq 1$,
    \begin{align*}
        \hausdorff^{n+j}(\C{K}{A}{j})=\hausdorff^{n}(\C{K}{A}{0}).
    \end{align*}
\end{lemma}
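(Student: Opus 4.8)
The plan is to prove the statement by induction on $j$, with the heart of the matter being the single step from $j-1$ to $j$; by iterating it suffices to treat the case $j = 1$ and to show
\[
\hausdorff^{n+1}(\C{K}{A}{1}) = \hausdorff^{n}(\C{K}{A}{0}),
\]
since exactly the same argument applies with $K$ replaced by $K^{(j-1)}$ and $n$ by $n+j-1$ (here one uses that $K^{(j-1)} \in \mathcal{K}_{(o)}^{n+j-1}$ and that $\F{K}{A}{j-1}$ plays the role of $\F{K^{(j-1)}}{A'}{0}$ for the appropriate data, with $\C{K}{A}{j-1}$ playing the role of $\C{K^{(j-1)}}{A'}{0}$).

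For the base step, I would unwind the definitions. By \eqref{eq:F} we have $\F{K}{A}{1} = \pyr{(\F{K}{A}{0})^\wedge}{-(n+1)e_{n+1}}$, and $\C{K}{A}{1} = \pyr{\F{K}{A}{1}}{0}$ is the star pyramid with apex $0$ over this set. The idea is that $\C{K}{A}{1}$ is swept out by segments from $0$ through points of the segment joining $\widehat{x}$ (for $x \in \F{K}{A}{0}$) to the apex $-(n+1)e_{n+1}$; slicing by the hyperplanes $\{x_{n+1} = t\}$ and applying Cavalieri, one expresses $\hausdorff^{n+1}(\C{K}{A}{1})$ as an integral over $t$ of $(n-1)$... rather, of $n$-dimensional Hausdorff measures of the slices. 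Concretely, each slice at height $t \in [-(n+1), 1]$ should be a scaled copy (by the factor controlling the position along the segments through $0$) of a translate of $\F{K}{A}{0}$, so that the $t$-integral collapses, via a computation in the spirit of Lemma \ref{lem : pyramid formula}, to $\hausdorff^n(\C{K}{A}{0}) = \hausdorff^n(\pyr{\F{K}{A}{0}}{0})$. The bookkeeping here is essentially the same two-parameter integration (one parameter along the segment $[\widehat x, -(n+1)e_{n+1}]$, one along the ray from $0$) that underlies the pyramid formula, and the point $-(n+1)e_{n+1}$ is chosen precisely so that the numerology works out to give equality rather than a dimension-dependent factor.

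The cleanest route, which I would actually pursue, is to avoid re-deriving the slicing computation and instead reduce directly to Lemma \ref{lem : pyramid formula}: note that $\pyr{Q}{v}$ for $v = -(n+1)e_{n+1}$ is an affine image of $\pyr{Q - v}{0} = \pyr{Q'}{0}$ under a shear, but more usefully, $\hausdorff^{n+1}(\C{K}{A}{1}) = \hausdorff^{n+1}(\pyr{\F{K}{A}{1}}{0})$, and $\F{K}{A}{1}$, being itself a star pyramid over a ``lifted'' base with a vertex on the $e_{n+1}$-axis, has $\hausdorff^n(\F{K}{A}{1})$ computed by Lemma \ref{lem : pyramid formula} to be $\tfrac{n+1}{n}\hausdorff^{n-1}$ of... — but this requires $\F{K}{A}{0} \subset \R^{n-1}$, which is false in general. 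So the honest path is the direct Cavalieri computation: integrate $\hausdorff^{n}$ of the slice $\C{K}{A}{1} \cap \{x_{n+1} = t\}$, identify this slice as a scaled translate of $\F{K}{A}{0} \times \{t\}$-type sets, and carry out the one-dimensional integral in $t$.

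**Main obstacle.** The delicate point is that $\F{K}{A}{0}$ is a subset of $\bd(K) \subset \R^n$, not of a hyperplane, so the clean product structure $Q \times \{h\}$ used in Lemma \ref{lem : pyramid formula} is not literally available; one must instead work with the genuine star pyramid $\C{K}{A}{0} = \pyr{\F{K}{A}{0}}{0}$ and show that lifting to $\widehat{\F{K}{A}{0}} = \F{K}{A}{0} \times \{1\}$, coning to $-(n+1)e_{n+1}$, and then coning again to $0$ reproduces the volume of the original cone. I expect the crux is verifying that the composite of these two star-pyramid operations rescales $n$-dimensional volume by exactly the factor $1$ — equivalently, checking that the Jacobian bookkeeping of the height parameter combines the $\tfrac{1}{n+1}$-type and $\tfrac{n+1}{n}$-type factors into unity — and confirming, using Lemma \ref{lem : basic prop pyramid}(iv), that no boundary or degenerate pieces are miscounted. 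Once the base case is in hand, the inductive step is purely formal.
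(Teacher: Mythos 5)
Your reduction to the case $j=1$ is fine and matches the paper, but the base case is where the content of the lemma lies, and as written you do not actually prove it. Two problems. First, your concrete description of the horizontal slices is wrong: a slice $\C{K}{A}{1}\cap\{x_{n+1}=t\}$ is not a scaled copy of a translate of $\F{K}{A}{0}$ (which is typically an $(n-1)$-dimensional subset of $\bd K$, while the slice is $n$-dimensional). Parametrizing points of $\C{K}{A}{1}$ as $\beta\bigl(\lambda(x,1)^\top+(1-\lambda)(-(n+1)e_{n+1})\bigr)$ one finds that for $t\in(0,1]$ the slice is the ``annular'' set $\bigl\{sx:\ x\in\F{K}{A}{0},\ s\in[t,\tfrac{n+1+t}{n+2}]\bigr\}\times\{t\}$, i.e.\ a difference of two dilates of the cone $\C{K}{A}{0}$, and for $t\in[-(n+1),0]$ it is $\tfrac{n+1+t}{n+2}\,\C{K}{A}{0}\times\{t\}$; its measure must be expressed through $\hausdorff^n(\C{K}{A}{0})$ (using that rays from $0$ meet $\bd K$ only once), not through any Hausdorff measure of $\F{K}{A}{0}$ itself. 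Second, and more importantly, the decisive verification that the resulting $t$-integral collapses to exactly $\hausdorff^n(\C{K}{A}{0})$ is deferred in your last paragraph (``I expect the crux is verifying \dots''); but that numerical identity \emph{is} the lemma, so the proposal has a genuine gap rather than a complete argument. (For the record, the slicing route can be completed: $\int_{-(n+1)}^{0}\bigl(\tfrac{n+1+t}{n+2}\bigr)^n\dint t+\int_{0}^{1}\bigl(\bigl(\tfrac{n+1+t}{n+2}\bigr)^n-t^n\bigr)\dint t=\tfrac{n+2}{n+1}-\tfrac{1}{n+1}=1$.)

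For comparison, the paper avoids your ``main obstacle'' (the base not lying in a hyperplane) by a different decomposition: up to a set of measure zero, $\C{K}{A}{1}=G_1\setminus G_2$ with $G_1=\pyr{(\C{K}{A}{0})^{\wedge}}{-(n+1)e_{n+1}}$ and $G_2=\pyr{(\C{K}{A}{0})^{\wedge}}{0}$. The common base $(\C{K}{A}{0})^{\wedge}$ \emph{does} lie in the hyperplane $\{x_{n+1}=1\}$, so Lemma \ref{lem : pyramid formula} applies directly and gives $\hausdorff^{n+1}(\C{K}{A}{1})=\bigl(\tfrac{n+2}{n+1}-\tfrac{1}{n+1}\bigr)\hausdorff^{n}(\C{K}{A}{0})$; the set identity is checked by an elementary two-dimensional triangle argument. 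If you want to salvage your plan, either carry out the slice computation above in full, or switch to this difference-of-pyramids decomposition, which is exactly the clean reduction to Lemma \ref{lem : pyramid formula} that you were looking for.
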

\begin{proof}
    First we show $ \hausdorff^{n+1}(\C{K}{A}{1})=\hausdorff^{n}(\C{K}{A}{0})$. Set $G_1= \pyr{(\C{K}{A}{0})^{\wedge}}{-(n+1)e_{n+1}}$ and $G_2=\pyr{(\C{K}{A}{0})^{\wedge}}{0}$.
    We claim that \begin{align*}
        \hausdorff^{n+1}(\C{K}{A}{1})=\hausdorff^{n+1}(G_1)-\hausdorff^{n+1}(G_2).
    \end{align*}
    Because $ \pyr{(\F{K}{A}{0})^{\wedge}}{0}$ has $\hausdorff^{n+1}$-measure zero - as it is part of the boundary of the convex body $\pyr{\widehat{K}}{0}$ - it suffices to show $$\C{K}{A}{1}=(G_1\setminus G_2) \cup \pyr{(\F{K}{A}{0})^{\wedge}}{0}.$$  Let $p \in \C{K}{A}{1}$, then by construction there exists $x \in (\F{K}{A}{0})^{\wedge}$ and $\lambda,\beta \in [0,1]$ with $p= \beta (\lambda x +(1-\lambda)(-(n+1))e_{n+1}$. Let $E= \lin (\{x,-(n+1)e_{n+1}\})$, this is a two dimensional subspace with $p \in E$. In fact $p$ is contained in the triangle $\operatorname{conv}(\{0,-(n+1)e_{n+1},x\})$. So if $p \notin [0,x]$ then it holds $p \notin \operatorname{conv}(\{0, e_{n+1},x\} )$. We obtain 
    \begin{align*}
        p \in &\operatorname{conv}(\{0,-(n+1)e_{n+1},x\}) \setminus \operatorname{conv}(\{0,e_{n+1},x\} ) \subseteq G_1 \setminus G_2.
    \end{align*}
    On the other hand let $p \in  G_1 \setminus G_2$. Then there exists some  $x \in (\F{K}{A}{0})^{\wedge}$  such that $p \in \operatorname{conv}(\{-(n+1)e_{n+1},e_{n+1},x\})$ and also $p \notin \operatorname{conv}(\{0,e_{n+1},x\})$. In particular, there exist $\lambda,\mu \in [0,1]$ with
    \begin{align*}
        p&=\lambda x+(1-\lambda)(\mu (-(n+1)e_{n+1})+(1-\mu)\cdot0)\\
        &=(\lambda +(1-\lambda)\mu)\left(\frac{\lambda}{\lambda +(1-\lambda)\mu}x+\frac{(1-\lambda)\mu}{\lambda +(1-\lambda)\mu}(-(n+1)e_{n+1})\right).
    \end{align*}
    This implies $p \in \C{K}{A}{1}$. Furthermore by construction we have $(\F{K}{A}{0})^{\wedge}\subseteq \F{K}{A}{1}$ and so $\pyr{(\F{K}{A}{0})^{\wedge}}{0}\subseteq \C{K}{A}{1}$. This yields the claim.
    
    Using Lemma \ref{lem : basic prop pyramid} we obtain
    \begin{align*}
        \hausdorff^{n+1}(\C{K}{A}{1})&=\hausdorff^{n+1}(G_1)-\hausdorff^{n+1}(G_2)\\&= \frac{n+2}{n+1}\hausdorff^{n}((\C{K}{A}{0})^{\wedge})- \frac{1}{n+1}\hausdorff^{n}((\C{K}{A}{0})^{\wedge})=\hausdorff^{n}(\C{K}{A}{0}).
    \end{align*}
     Analogously one can prove $ \hausdorff^{n+j}(\C{K}{A}{j})=\hausdorff^{n+j-1}(\C{K}{A}{j-1})$ for $j \geq 1$. This yields the desired statement.
\end{proof}

\begin{proof}[Proof of Theorem \ref{thmmain: sci cone volume}]
   
    % Without loss of generality we may assume $$\dim (A)= \dim (\operatorname{aff}(\bd (K^{\star}) \cap A)).$$
   For any $k\in\Z_{\geq 1}$, we consider the embedding  \begin{align*}\varphi_k: \R^k \to \R^{k+1},\quad
       \varphi_k(x)= \begin{pmatrix} \frac{k+2}{k+1}x \\ - \frac{1}{k+1}\end{pmatrix}
   \end{align*}
    and we write $\psi_{n+j} \coloneq \varphi_{n+j-1} \circ\cdots \circ \varphi_n$, where $j\geq 1$. For $j=0$ we define $\psi_n$ to be the identity on $\R^n$. We seek to apply the linear subspace concentration inequality to the convex bodies $K^{(j)}$ (introduced in \eqref{eq : pyramid defi}) and the linear spaces $L^{(j)}\coloneq \lin (\psi_{n+j}(A))\subseteq\R^{n+j}$.
   Note that we have $\dim (L^{(j)}) = \dim (A )+1$ for all $j >0 $. 

    Here we write $F^{(j)} \coloneq \F{K}{A}{j}$ for the boundary part of the $j$-th pyramid associated to $A$ (cf.\ \eqref{eq:F}). By Lemmas \ref{lem : Schneider} and \ref{lem : cone pyramid} we have
    \begin{equation}
    \label{eq:lift}
        \conevol{K} ( \pi_n ( \bd (K ^\star)\cap A ) ) = \hausdorff^n ( [F^{(0)},0] ) =  \hausdorff^{n+j}( [F^{(j)}, 0]).
    \end{equation}
    It follows from Lemma \ref{lem : basic prop pyramid} iii) that
    \begin{equation}
    \label{eq:claim_pt2}
        \psi_{n+j} (\bd K^\star ) \subseteq \bd((K^{(j)})^\star)
    \end{equation}
    holds for all $j\geq 0.$
    We claim that
    \begin{equation}
    \label{eq:claim_pt1}
        F^{(j)}\cap\regbd (K^{(j)}) \subseteq \nu_{K^{(j)}}^{-1} (\pi_{n+j}\circ \psi_{n+j} (\bd (K^\star) \cap A)).
    \end{equation}
    For $j=0$, this trivially follows from the definition of $F^{(0)}$. For $j>0$ we argue by induction. Let $x\in F^{(j)}\cap\regbd (K^{(j)})$. Then $x$ can be expressed as \[
    x=\mu \hat{y} + (1-\mu)(-(n+j)e_{n+j}),
    \]
    where $\mu\in (0,1)$, $y\in F^{(j-1)}\cap \regbd (K^{(j-1)})$ and $\hat{y} = (y,1)^\top\in \R^{n+j} $ (cf.\ Lemma \ref{lem : basic prop pyramid}). By induction, there exists an outer normal vector of $K^{(j-1)}$ at $y$ of the form $\psi_{n+j-1}(a)$, where $a\in \bd (K^\star) \cap A$. By \eqref{eq:claim_pt2}, we know that $\psi_{n+j-1}(a)\in \bd ((K^{(j-1)})^\star)$.
    Hence, Lemma \ref{lem : basic prop pyramid} iii) shows that $\psi_{n+j}(a)$ is the unique (up to positive multiples) outer normal vector of $K^{(j)}$ at $x$
    % i.e., 
    % \(
    %     \langle \psi_{n+j-1}(a), y^\prime \rangle  \leq 1,
    % \)
    % for all $y'\in K^{(j-1)}$ with equality for $y^\prime = y$.
    % It follows that
    % \[
    % \langle \psi_{n+j}(a), x \rangle = \mu \frac{n+j+1}{n+j} \langle \psi_{n+j-1}(a), y\rangle - \frac{1}{n+j}\left(\mu - (1-\mu)(n+j)\right) = 1   
    % \]
    % and, similarly,
    % \[
    % \langle \psi_{n+j}(a), x^\prime \rangle \leq 1
    % \]
    % for all $x'\in K^{(j)}$. Thus, $\psi_{n+j}(a)$ is the unique (up to positive multiples) outer normal vector of $K^{(j)}$ at $x$ 
    and \eqref{eq:claim_pt1} is proven.

    It follows from \eqref{eq:claim_pt1} and the fact that $L^{(j)}$ is a linear space that
    \[
    F^{(j)}\cap\regbd (K^{(j)}) \subseteq \nu_{K^{(j)}}^{-1} ( L^{(j)} \cap \sph^{n+j-1}).
    \]
    Since the Hausdorff measure of the non-regular boundary points of $K^{(j)}$ is zero, we obtain from \eqref{eq:lift} that
    \[
        \conevol{K}(\pi_n(\bd(K^\star)\cap A)) \leq \conevol{K^{(j)}} ( L^{(j)} \cap \sph^{n+j-1}).
    \]
    Since $K^{(j)}$ is centered and $L^{(j)}$ is $(\dim (A )+ 1)$-dimensional, the linear subspace concentration inequality (cf.\ Theorem \ref{thmmain: sci cone volume}) yields
    \[\begin{split}
        \conevol{K}(\pi_n(\bd(K^\star)\cap A)) &\leq \frac{\dim( A )+1}{n+j} \vol_{n+j}(K^{(j)})\\
        &= \frac{\dim (A) +1}{n+j}\cdot\frac{n+j+1}{n+1} \vol_{n+j}(K),
    \end{split}\]
    where we used Lemma \ref{lem : basic prop pyramid} to obtain the last line. Taking the limit $j\to\infty$ now yields the desired inequality.
    \end{proof}

\section{Equality cases}
\label{sec:equality}

Here we extend the characterization of the equality case in Theorem \ref{theo: maintheo} to the convex body setting. The corresponding results for centered polytopes are shown in \cite{FHK}.
\begin{theo} \label{theo : equality} 
    Let $K \in \Kcn$.
\begin{enumerate} \item If $A = \{a\}$, then equality holds in \eqref{eq: aff conc cond} if and only if K is a pyramid with base $Q=H_K(a)\cap K$.
\item If $A$ is a supporting hyperplane of $K^{\star}$ spanned by boundary point of $K^{\star}$, then equality holds in \eqref{eq: aff conc cond} if and
only if $K$ is a pyramid with apex $v_A$, where $v_A$ is the unique point in $K$ with $\langle v,a\rangle = 1$ for all $a\in A$.
\end{enumerate}
\end{theo}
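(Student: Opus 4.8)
The plan is to reduce both parts of Theorem~\ref{theo : equality} to a single rigidity fact about centered convex bodies possessing a distinguished facet, to verify the ``if'' direction by a direct centroid computation, and to obtain the ``only if'' direction from Brunn--Minkowski slicing together with the centering condition.

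\emph{Reduction.} First I would rephrase both parts in terms of star pyramids via Lemma~\ref{lem : Schneider}. In part (i) one may assume $a\in\bd(K^\star)$, since otherwise the left-hand side of \eqref{eq: aff conc cond} vanishes while the right-hand side is positive; writing $u=a/|a|$, equality in \eqref{eq: aff conc cond} becomes $\haus^n(\pyr{\nu_K^{-1}(\{u\})}{0})=\tfrac1{n+1}\vol_n(K)$, and, the left side being positive, $Q:=H_K(u)\cap K$ must be a facet, with $\nu_K^{-1}(\{u\})$ equal to the relative interior of $Q$ up to an $\haus^{n-1}$-null set. In part (ii), $K^{\star\star}=K$ gives $v_A\in K$, and for $u\in\sph^{n-1}$ one checks that $u\in\pi(\bd(K^\star)\cap A)$ if and only if $\suk_K(u)=\ip{v_A}{u}$, i.e.\ if and only if $u$ is an outer normal of $K$ at $v_A$; hence $\nu_K^{-1}(\pi(\bd(K^\star)\cap A))$ equals, up to a null set, the set $S$ of regular boundary points of $K$ lying on some face containing $v_A$, and taking complements in $\bd(K)$, equality in \eqref{eq: aff conc cond} (where $\dim A=n-1$) becomes $\haus^n(\pyr{\bd(K)\setminus S}{0})=\tfrac1{n+1}\vol_n(K)$.

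\emph{The ``if'' direction.} Suppose $K=\conv(Q\cup\{v\})$ is a centered pyramid with base $Q$. The classical centroid formula for pyramids, $\cen(K)=\tfrac1{n+1}(v+n\,\cen(Q))$, forces $v=-n\,\cen(Q)$. Choosing coordinates so that $\aff(Q)=\{x_n=h\}$ with $h=\suk_K(u)>0$, the apex $v$ has $n$-th coordinate $-nh$, so $\vol_n(K)=\tfrac1n((n+1)h)\haus^{n-1}(Q)$, while Lemma~\ref{lem : pyramid formula} gives $\haus^n(\pyr{Q}{0})=\tfrac hn\haus^{n-1}(Q)=\tfrac1{n+1}\vol_n(K)$. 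By the reduction this is precisely the claimed equality in \eqref{eq: aff conc cond}: in part (i) with $a$ the outer normal of $Q$, and in part (ii) after noting that then $v=v_A$ and $Q$ is the unique facet of $K$ not containing $v$, so that $\bd(K)\setminus S$ equals the relative interior of $Q$ up to a null set.

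\emph{The ``only if'' direction.} For part (i) set $u=e_n$, $\alpha=\min_{x\in K}x_n<0$, choose $p\in K$ with $p_n=\alpha$, and let $V(t)=\haus^{n-1}(K\cap\{x_n=t\})$ on $[\alpha,h]$. Convexity yields $\conv(Q\cup\{p\})\subseteq K$, hence $V(t)\ge\bigl(\tfrac{t-\alpha}{h-\alpha}\bigr)^{n-1}\haus^{n-1}(Q)$, while Brunn--Minkowski makes $V^{1/(n-1)}$ concave; by Cavalieri and $\cen(K)=0$ one has $\vol_n(K)=\int_\alpha^h V$ and $\int_\alpha^h tV(t)\,dt=0$. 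Inserting the equality $\vol_n(K)=\tfrac{(n+1)h}{n}\haus^{n-1}(Q)$, the nonnegative excess $\varrho(t)=V(t)-\bigl(\tfrac{t-\alpha}{h-\alpha}\bigr)^{n-1}\haus^{n-1}(Q)$ has total mass $\tfrac{\alpha+nh}{n}\haus^{n-1}(Q)$ and first moment $-\tfrac{(h-\alpha)(\alpha+nh)}{n(n+1)}\haus^{n-1}(Q)$; the first identity forces $\alpha\ge-nh$, and combining it with the concavity of $V^{1/(n-1)}$ (which bounds how far to the left the mass of $\varrho$ can be pushed while $V^{1/(n-1)}$ stays above the line joining $(\alpha,0)$ to its value at $t=h$) forces $\alpha=-nh$ and $\varrho\equiv0$. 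Then $V^{1/(n-1)}$ is affine and vanishes at $t=\alpha$, and the equality case of Brunn--Minkowski makes every slice $K\cap\{x_n=t\}$ a homothet of $Q$ collapsing to $p$, i.e.\ $K=\conv(Q\cup\{p\})$. For part (ii), using that $\pyr{\bd(K)\setminus S}{v_A}=K$ up to a null set (from any interior point the ray out of $v_A$ leaves $K$ through $\bd(K)\setminus S$), I would run the analogous argument organized around $v_A$: the volume and centroid of the star pyramid $K=\pyr{\bd(K)\setminus S}{v_A}$, together with $\haus^n(\pyr{\bd(K)\setminus S}{0})=\tfrac1{n+1}\vol_n(K)$ and $\cen(K)=0$, should force $\bd(K)\setminus S$ to lie in a single hyperplane, hence to be (the relative interior of) a facet $Q$ with $v_A=-n\,\cen(Q)$; standard face reasoning then shows every facet of $K$ other than $Q$ contains $v_A$, i.e.\ $K=\conv(Q\cup\{v_A\})$.

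I expect the genuinely delicate point to be this last rigidity step. The plain inequalities used in the proof of Theorem~\ref{theo: maintheo} only confine $\alpha$ to an interval (here $[-nh,-h/n]$); to pin down $\alpha=-nh$ and the affineness of $V^{1/(n-1)}$ one must carefully carry along the equality cases of the Brunn--Minkowski inequality, and for part (ii) there is the additional task of upgrading ``$\bd(K)\setminus S$ has the volume of a cone over a facet'' to ``$\bd(K)\setminus S$ is flat''.
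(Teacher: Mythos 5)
Your reductions and both ``if'' directions are fine, and for part (i) your route (section function $V(t)$ with respect to $a^\perp$, $1/(n-1)$-concavity via Brunn--Minkowski, the centroid condition $\int tV=0$, and the equality $\vol_n(K)=\tfrac{n+1}{n}h\,\haus^{n-1}(Q)$) is essentially the argument the paper invokes by citing \cite[Theorem 1.2 i)]{FHK}. Even there, though, the decisive step is only asserted: the parenthetical claim that concavity of $V^{1/(n-1)}$ ``bounds how far to the left the mass of $\varrho$ can be pushed'' and hence forces $\alpha=-nh$, $\varrho\equiv 0$ is exactly the rigidity one has to prove, and your sketch gives no actual mechanism (note, e.g., that the centroid of $\varrho$ sits at $-\tfrac{h-\alpha}{n+1}$, which is not by itself contradictory for $\alpha>-nh$); this needs the careful comparison argument from \cite{FHK}, which you acknowledge but do not supply.

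The genuine gap is in part (ii). After reducing to ``$\haus^n(\pyr{\bd(K)\setminus S}{0})=\tfrac{1}{n+1}\vol_n(K)$'' you say the volume and centroid data ``should force $\bd(K)\setminus S$ to lie in a single hyperplane'', but no argument is offered, and none of the tools you have set up produces one: unlike in part (i), there is no distinguished direction along which to slice, so Brunn--Minkowski concavity has nothing to act on, and a single scalar identity plus $\cen(K)=0$ does not obviously rigidify a whole boundary piece. The paper's proof supplies precisely the missing linearizing idea: for $\omega=\pi(\bd(K^\star)\cap A)$ the function $f(x)=\conevol{K-x}(\omega)=\conevol{K}(\omega)-\ip{x}{\int_\omega u\,\dint\sura_K(u)}$ is \emph{affine} on $K$, so its sublevel sets $K(t)$ are parallel halfspace sections, $v_A\in K(0)$, and since $K$ is centered $\conevol{K}(\omega)=f(0)=\EE[f(X)]$ for $X$ uniform in $K$; a layer-cake computation with the inclusion $K(t)\supseteq\tfrac tm K(m)+\tfrac{m-t}{m}v_A$ gives $\vol_n(K(t))\geq (t/m)^n$, re-proves the inequality, and in the equality case forces $m=1$ and $\vol_n(K(t))=t^n$ for all $t$, whence $\haus^{n-1}(\{f=t\})=nt^{n-1}$ and $K$ is a pyramid with apex $v_A$. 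Without this (or some substitute that reduces part (ii) to a one-dimensional problem), your plan for the ``only if'' direction of (ii) does not go through.
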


Note that the point $v_A$ from ii) indeed exists. Since $A$ does not pass through the origin, it follows that $A$ can be uniquely expressed as $A = \{ a\in \R^n \colon \langle a,v_A\rangle = 1\}$ for some $v_A\in\R^n$. Since $A$ is supporting to $K^\star$ it follows that $v_A\in K$. 
%Since $A$ is spanned by points from $K^\star$, the vector $v$ is a vertex of $K$ and we call $v$ the dual vertex to $A\cap K^\star$

\begin{proof}
 The proof of i) can be obtained along the same lines as in \cite[Theorem 1.2 i)]{FHK}; there, only the concavity properties of the section function w.r.t.\ $a^\perp$ are used.  

For ii), we adapt an argument from \cite{FHK}: For a set $\omega\subseteq\sph^{n-1}$ we consider the function
\[
f_\omega \colon K\to\R,\quad f_\omega(x) = \conevol{K-x}(\omega).
\]
This is an affine function on $K$. Indeed,
\begin{equation} \label{eq:fomega}
\begin{split}
\conevol{K-x}(\omega) &= \frac 1n \int_\omega \suk_{K-x}(u) \dint \sura(K-x,u)\\
&= \frac 1n \int_\omega \suk_K(u) -\langle x,u\rangle \dint\sura_K(u)\\ 
&= \conevol{K}(\omega) - \left\langle x,\int_\omega u\dint\sura_K(u)\right\rangle.
\end{split}
\end{equation}
In the following we consider the set $\omega = \pi(\bd (K^\star)\cap A ) $ and abbreviate $f=f_\omega$. Let $X$ be a uniformly distributed random vector in $K$. For convenience, we assume that $\vol_n(K)=1$ so that $f(x)\in [0,1]$ for all $x\in K$. Since $f$ is affine and $K$ is centered, we have
\(
\conevol{K}(\omega) = f(0) = \EE[f(X)].
\)
On the other hand, we have
\(
\EE[f(X)] = 1 - \int_0^1 \PP(f(X) < t)\dint t.
\)
 For $t\in [0,1]$, let $K(t) = \{x\in K\colon f(x)\leq t\}$ be the corresponding sublevel set of $f$. Note that since $f$ is affine, $K(t)$ is the intersection of $K$ with a halfspace whose normal vector $\int_\omega u \dint \sura_K(u)$ is the same for all $t$.

Note that the vector $v_A\in K$ satisfies (by definition) $\langle v_A, u\rangle = \suk_K(u)$ for all $u\in\omega$. Thus, we obtain from \eqref{eq:fomega} that
$v_A\in K(0)$. Let $m\in [0,1]$ denote the maximum of $f$. Then for any $t\in[0,1]$ we have the inclusion,
\[
K(t) \supseteq \frac tm K(m) + \frac{t-m}{m}K(0)\supseteq \frac tm K(m) + \frac{t-m}{m}v_A.
\]
Thus,
\begin{equation}
\label{eq:sublevel_est}
\PP(f(X) < t) = \vol_n ( K(t)) \geq \vol_n\left(\frac tm K(m)\right) = \left(\frac tm \right)^n.
\end{equation} 
Since we have $f(x) = 1$ for $x\in[m,1]$ we deduce
\[
\conevol{K}(\omega) = 1-\int_0^m \PP(f(X)<t) \dint t -( 1-m ) \leq \frac{mn}{n+1} \leq \frac{n}{n+1}
\]
This reproves the affine subspace concentration condition in this particular case. In order to have equality, it is necessary that $m=1$. Moreover, for the first inequality in the above to be tight, we need equality in \eqref{eq:sublevel_est}, i.e., $\vol_n (K(t)) = t^n$. Taking derivatives, it follows that $\hausdorff^{n-1}(\{x\in K \colon f(x) =t \}) = nt^{n-1}$. Since $f$ is an affine function this implies that $K$ is a pyramid with apex $v_A$ and base $\{x\in K\colon f(x) = m\}$.
\end{proof}

\section*{Acknowledgements}

Ansgar Freyer is partially supported by the Austrian Science Fund (FWF) Project
P34446-N.

\bibliographystyle{abbrv}
\bibliography{biblio}

\begin{thebibliography}{10}

\bibitem{boroczky2022log}
K.~B{\"o}r{\"o}czky and P.~Kalantzopoulos.
\newblock Log-brunn-minkowski inequality under symmetry.
\newblock {\em Transactions of the American Mathematical Society}, 375(08):5987--6013, 2022.

\bibitem{BHZ}
K.~J. B\"or\"oczky, P.~Heged{\H{u}}s, and G.~Zhu.
\newblock On the discrete logarithmic {M}inkowski problem.
\newblock {\em International Mathematics Research Notices. IMRN}, (6):1807--1838, 2016.

\bibitem{boroczky2016cone}
K.~J. B\"{o}r\"{o}czky and M.~Henk.
\newblock Cone-volume measure of general centered convex bodies.
\newblock {\em Advances in Mathematics}, 286:703--721, 2016.

\bibitem{boroczky2014conevolume}
K.~J. B\"{o}r\"{o}czky and M.~Henk.
\newblock Cone-volume measure and stability.
\newblock {\em Advances in Mathematics}, 306:24--50, 2017.

\bibitem{boroczky2013logarithmic}
K.~J. B{\"o}r{\"o}czky, E.~Lutwak, D.~Yang, and G.~Zhang.
\newblock The logarithmic {M}inkowski problem.
\newblock {\em Journal of the American Mathematical Society}, 26(3):831--852, 2013.

\bibitem{Boroczky2022}
K.~J. Böröczky.
\newblock {\em The logarithmic Minkowski conjecture and the $L_p$-Minkowski problem}, pages 83--118.
\newblock De Gruyter, Berlin, Boston, 2023.

\bibitem{chen2019logarithmic}
S.~Chen, Q.-R. Li, and G.~Zhu.
\newblock The logarithmic {M}inkowski problem for non-symmetric measures.
\newblock {\em Transactions of the American Mathematical Society}, 371(4):2623--2641, 2019.

\bibitem{FHK}
A.~Freyer, M.~Henk, and C.~Kipp.
\newblock Affine subspace concentration conditions for centered polytopes.
\newblock {\em Mathematika}, 69(2):458--472, 2023.

\bibitem{henk2014cone}
M.~Henk and E.~Linke.
\newblock Cone-volume measures of polytopes.
\newblock {\em Advances in Mathematics}, 253:50--62, 2014.

\bibitem{schneider_2013}
R.~Schneider.
\newblock {\em Convex bodies: The {B}runn–{M}inkowski theory}.
\newblock Encyclopedia of Mathematics and its Applications. Cambridge University Press, 2 edition, 2013.

\bibitem{wu2023affine}
K.-Y. Wu.
\newblock Affine subspace concentration conditions.
\newblock {\em {\'E}pijournal de G{\'e}om{\'e}trie Alg{\'e}brique}, 7, 2023.

\bibitem{zhu2014logarithmic}
G.~Zhu.
\newblock The logarithmic {M}inkowski problem for polytopes.
\newblock {\em Advances in Mathematics}, 262:909--931, 2014.

\end{thebibliography}
\end{document}